\newtheorem{thm}{Theorem}
\newtheorem{lem}[thm]{Lemma}
\newtheorem{cor}[thm]{Corollary}
\newtheorem{conj}[thm]{Conjecture}
\newcommand{\Rmnum}[1]{\expandafter\@slowromancap\romannumeral #1@}
\begin{document}

\title{Equitable list point arboricity of graphs\thanks{This work is mainly supported by the Specialized
Research Fund for the Doctoral Program of Higher Education (No.\,20130203120021) and is partially supported by the National Natural Science Foundation of China (No.\,11301410, 11201440, 11101243), the Natural Science Basic Research Plan in Shaanxi Province of China (No.\,2013JQ1002), and the Fundamental Research Funds for the Central Universities (No.\,K5051370003, K5051370021).}}
\author{Xin Zhang\thanks{Email address: xzhang@xidian.edu.cn.}\\
{\small Department of Mathematics, Xidian University, Xi'an 710071, China}}
\date{}

\maketitle

\begin{abstract}\baselineskip  0.60cm
A graph $G$ is list point $k$-arborable if, whenever we are given a $k$-list assignment $L(v)$ of colors
for each vertex $v\in V(G)$, we can choose a color $c(v)\in L(v)$ for each vertex $v$ so that each color class induces an acyclic subgraph of $G$, and is equitable list point $k$-arborable if $G$ is list point $k$-arborable and each color appears on at most $\lceil |V(G)|/k\rceil$ vertices of $G$.
In this paper, we conjecture that every graph $G$ is equitable list point $k$-arborable for every $k\geq \lceil(\Delta(G)+1)/2\rceil$ and settle this for complete graphs, 2-degenerate graphs, 3-degenerate claw-free graphs with maximum degree at least 4, and planar graphs with maximum degree at least 8.\\[.2em]
\textbf{Keywords:} equitable coloring; list coloring; point arboricity; planar graph

\end{abstract}

\baselineskip  0.60cm

\section{Introduction}

All graphs considered here are simple and undirected. We use $V(G), E(G),\delta(G)$ and $\Delta(G)$ to denote the set of vertices, the set of edges, the minimum degree and the maximum degree of $G$, respectively. For a plane graph $G$, we denote by $F(G)$ the set of faces of $G$. A $k$-, $k^+$- and $k^-$-vertex (resp.\,face) is a vertex (resp.\,face)  of degree $k$, at least $k$ and at most $k$, respectively. A $(k_1,k_2,k_3)$-face is a 3-face that is incident with a $k_1$-vertex, a $k_2$-vertex and a $k_3$-vertex. Other faces such as $(k_1^-,k_2^-,k_3^-)$-face can be defined similarly. For any undefined notions we refer the readers to \cite{Bondy.2008}.

The \emph{point arboricity}, or \emph{vertex arboricity} of $G$, which is introduced by Chartrand \emph{et al.}\,\cite{CK} and denoted by $\rho(G)$, is the minimum number of colors that can be used to color the vertices of G so that each color class induces an acyclic subgraph of $G$. In 2000, Borodin,
Kostochka and Toft \cite{BKT} introduced the list version of point arboricity. A graph $G$ is \emph{list point $k$-arborable} if, whenever we are given a $k$-list assignment $L(v)$ of colors
for each vertex $v\in V(G)$, we can choose a color $c(v)\in L(v)$ for each vertex $v$ so that each color class induces an acyclic subgraph of $G$.
The minimum integer $k$ such that $G$ is list point $k$-arborable, denoted by $\rho_l(G)$, is the \emph{list point arboricity} of $G$. It is proved by Xue and Wu \cite{XW} that $\rho_l(G)\leq \lceil(\Delta(G)+1)/2\rceil$ for every graph $G$.

An equitable list $k$-coloring (not needed to be proper) of $G$ is an assignment of colors to the vertices of $G$ so that the color of each vertex $v\in V(G)$ is chosen from its list $L(v)$ of size $k$ and each color appears on at most $\lceil |V(G)|/k\rceil$ vertices of $G$. This parameter of graphs was introduced by Kostochka, Pelsmajer and West \cite{KPW} and extensively studied by many authors since then.
As we know, the list point arboricity is a chromatic parameter corresponding to a list improper coloring. Therefore, we can naturally
introduce the equitable version of the list point arboricity, which is an equitable list improper chromatic parameter.

A graph $G$ is \emph{equitable list point $k$-arborable} if $G$ is list point $k$-arborable and each color appears on at most $\lceil |V(G)|/k\rceil$ vertices of $G$.
The minimum integer $k$, denoted by $\rho_l^=(G)$, is the \emph{equitable list point arboricity} of $G$.
In this paper, we put forward the following conjectures and confirm them for complete graphs, 2-degenerate graphs, 3-degenerate claw-free graphs with maximum degree at least 4, and planar graphs with maximum degree at least 8.

\begin{conj}\label{conj}
$\rho_l^=(G)\leq \lceil(\Delta(G)+1)/2\rceil$ for every graph $G$.
\end{conj}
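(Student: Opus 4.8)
The plan is to separate the two requirements built into equitable list point $k$-arborability: the acyclicity of every color class, and the balance condition that no color is used on more than $\lceil |V(G)|/k\rceil$ vertices. The first requirement is essentially already available, since for $k\ge\lceil(\Delta(G)+1)/2\rceil$ the result of Xue and Wu guarantees that every $k$-list assignment admits a choice function whose color classes all induce forests (and larger lists only help, so this holds for every $k\ge\lceil(\Delta(G)+1)/2\rceil$). Thus the real content of the conjecture is to produce such an acyclic list coloring that is in addition balanced, and my strategy is to start from an arbitrary acyclic list coloring, whose existence is guaranteed, and repair its imbalance by local recoloring moves.

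Concretely, I would fix a $k$-list assignment $L$, write $n=|V(G)|$, and consider, among all choice functions $c$ whose color classes are acyclic, one minimizing the total excess $\Phi(c)=\sum_{a}\max\{0,|c^{-1}(a)|-\lceil n/k\rceil\}$. If a minimizer is not equitable, some color $a$ is used more than $\lceil n/k\rceil$ times while some color $b$ is used fewer times. I would then try to recolor a single vertex $v\in c^{-1}(a)$ to an under-used color $b'\in L(v)$: such a move lowers $\Phi$ by one and contradicts minimality. The move from $a$ to $b'$ is legal precisely when $b'\in L(v)$, $b'$ is under-used, and adding $v$ to the $b'$-class closes no cycle; the last condition fails only if $v$ has at least two neighbors colored $b'$ lying in a common tree of the $b'$-forest.

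The key step, and the one I expect to be the main obstacle, is to show that when $a$ is over-used some legal move always exists. Here the degree bound $\Delta(G)\le 2k-1$ should be exploited: each over-used vertex $v$ has at most $2k-1$ neighbors, and a color blocks $v$ only if $v$ sends at least two edges into a single tree of that color, so only few colors can block any given $v$. A counting argument along these lines, applied across the entire over-used class rather than to a single vertex, aims to locate one over-used vertex and one under-used color that are compatible. The difficulty is that the two constraints interact adversarially: a color that is under-used, hence a good target for balance, may be exactly the one that would create a cycle, while every cycle-safe color may be over-used; escaping this deadlock seems to require Kempe-type exchange chains rather than single-vertex moves, and controlling those in full generality is what keeps the statement at the level of a conjecture.

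For this reason I would, as the paper does, first establish the conjecture on structured classes where the local interaction is tame. For $d$-degenerate graphs (in particular $d\le 2$, and $d=3$ under the claw-free hypothesis) I would process the vertices in a degeneracy order, giving each vertex a color from its list that both keeps its class a forest, possible because it has at most $d$ already-colored back-neighbors, and is chosen greedily among the currently least-used colors so as to preserve balance. For planar graphs with $\Delta\ge 8$ I would instead run a discharging argument to isolate a small family of reducible configurations and then induct: delete a reducible configuration, color the remainder equitably by induction, and reinsert, using planar sparsity together with the slack in $\lceil(\Delta+1)/2\rceil$ relative to the few local constraints. Complete graphs serve as a clean base case, since there acyclicity already forces every color class to have at most two vertices, so the problem collapses to an equitable list coloring with classes of size at most two, solvable by a Hall- or matching-type argument.
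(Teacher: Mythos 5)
You have set out to prove Conjecture \ref{conj}, but this statement is a \emph{conjecture}: the paper itself offers no proof of it, only confirmations for complete graphs, $2$-degenerate graphs, $3$-degenerate claw-free graphs with $\Delta\geq 4$, and planar graphs with $\Delta\geq 8$. Your proposal, to its credit, is honest about this --- you explicitly concede that the central step of your exchange argument (escaping the deadlock where every under-used color in $L(v)$ closes a cycle and every cycle-safe color is saturated, which you would need Kempe-type chains through the color forests to resolve) is not established and ``is what keeps the statement at the level of a conjecture.'' So what you have written is a plausible program plus a survey of the known special cases, not a proof; the gap is exactly the one you name, and it is the genuine open content of the problem. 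The single-vertex counting you sketch does work for acyclicity alone (a vertex of degree at most $2k-1$ has at most $k-1$ colors blocked, since blocking requires two neighbors in one tree), but it gives no leverage on the balance constraint, and no argument in the paper or in your proposal closes that interaction.

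One further caution on your treatment of the special cases, where your route quietly diverges from the paper's and becomes shakier. For degenerate graphs you propose a one-vertex-at-a-time greedy along a degeneracy order, ``chosen greedily among the currently least-used colors.'' Under arbitrary lists this does not obviously maintain the cap $\lceil |V(G)|/k\rceil$: the least-used colors globally need not lie in $L(v)$, and a color forced on many late vertices can overflow its class. The paper's actual device, Lemma \ref{label}, avoids this in the standard Kostochka--Pelsmajer--West manner: it removes a block $S=\{x_1,\ldots,x_k\}$ of exactly $k$ vertices, ordered so that $|N(x_i)\setminus S|\leq 2i-1$, colors $G-S$ equitably by induction, and then extends by giving the $k$ vertices of $S$ pairwise distinct list colors (feasible since $x_i$ must avoid at most $(k-i)+(i-1)=k-1$ colors), so that every class grows by at most one and equitability is inherited automatically. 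If you intend to write up the degenerate and planar cases rigorously, you should replace the per-vertex greedy by this block-extension lemma; your discharging outline for planar graphs then matches the paper's Theorem \ref{thm3}, whose reducible configurations are precisely sets $S$ satisfying the hypothesis of Lemma \ref{label}.
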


\begin{conj}\label{conj2}
Every graph $G$ is equitable list point $k$-arborable for every $k\geq \lceil(\Delta(G)+1)/2\rceil$.
\end{conj}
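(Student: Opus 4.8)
The plan is to produce, for every $k$-list assignment $L$ with $k\geq\lceil(\Delta(G)+1)/2\rceil$, a single coloring that at once forces each color class to induce a forest and obeys the equitability cap $\lceil|V(G)|/k\rceil$. My first move is to replace the acyclicity requirement by the strictly stronger but \emph{local} requirement that each color class be a matching, i.e.\ that every vertex have at most one neighbor of its own color; since a graph of maximum degree at most $1$ has no cycle, any such coloring is list point arboreal. This strengthening is exactly what underlies the Xue--Wu bound: writing $n=|V(G)|$ and noting $\Delta(G)\leq 2k-1$, any coloring that is \emph{locally optimal} for the number of monochromatic edges already has this matching property, because if a vertex $v$ had two neighbors in its own color, then its remaining at most $2k-3$ incident edges would meet the other $k-1$ colors of $L(v)$, so by pigeonhole some color of $L(v)$ is used on at most one neighbor of $v$, and recoloring $v$ there strictly decreases the monochromatic edge count. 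In particular the family of matching colorings is nonempty.

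Working inside that family, I would then select a coloring minimizing the imbalance potential $\Phi=\sum_{c}\binom{n_c}{2}$, where $n_c$ is the number of vertices colored $c$. Moving one vertex from a color of count $n_a$ to one of count $n_b$ changes $\Phi$ by $n_b-n_a+1$, so minimality forbids any matching-preserving reassignment from a color to one with at least two fewer vertices. If the extremal coloring were not equitable, some color $a$ would have $n_a>\lceil n/k\rceil$, and since $\sum_c n_c=n$ some color $b$ would have $n_b\leq\lceil n/k\rceil-1\leq n_a-2$. The aim is therefore to exhibit a matching-preserving transfer of mass from $a$ toward $b$, which lowers $\Phi$ and contradicts minimality.

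The transfer is immediate in the lucky case: if some vertex $v$ colored $a$ has $b\in L(v)$, at most one $b$-neighbor $u$, and $u$ has no other $b$-neighbor, then recoloring $v$ to $b$ is safe and lowers $\Phi$. In general neither is $b$ forced into the lists of the heavy-color vertices, nor is a direct recolor safe, and one must route the mass along an alternating chain of reassignments: recolor $v$ to $b$, push the offending $b$-neighbor $u$ out of $b$ into a color of $L(u)$ met by at most one of its neighbors, and iterate, steering the chain toward a vertex whose final color is under-full so that the net effect decreases $\Phi$. Maintaining the matching property at every interior vertex of such a chain is routine; the content is to guarantee the chain exists and terminates favorably.

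I expect \textbf{this rebalancing step to be the main obstacle}, for two reasons special to the present list-\emph{and}-defect setting. First, the pigeonhole that supplies a safe target can fail at an interior vertex $u$ already at defect $1$: when $\Delta(G)=2k-1$ and $u$ meets exactly two neighbors in each non-$b$ color of $L(u)$, every color but $b$ is blocked and the chain stalls. Second, because the lists are arbitrary one cannot fall back on ordinary Kempe $(a,b)$-swaps, since a vertex on the alternating subgraph need not have both $a$ and $b$ in its list. Overcoming these requires a movable-vertex / augmenting-structure analysis in the spirit of the Hajnal--Szemer\'edi theorem, adapted simultaneously to list constraints and to the matching-defect condition. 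Supplying such a structure for \emph{arbitrary} $G$ is precisely what the special hypotheses in this paper (bounded degeneracy, claw-freeness, planarity with $\Delta\geq 8$) circumvent by furnishing a controllable degeneracy ordering or a discharging certificate, and closing this gap in full generality is the heart of the conjecture.
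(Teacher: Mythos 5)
You have not proved the statement, and indeed you could not have by comparison with the paper: the statement is Conjecture \ref{conj2}, which the paper itself leaves open, settling it only for complete graphs, $2$-degenerate graphs, $3$-degenerate claw-free graphs with $\Delta\geq 4$, and planar graphs with $\Delta\geq 8$. Your first step is sound and is essentially the Xue--Wu argument: with $k\geq\lceil(\Delta(G)+1)/2\rceil$, a list coloring minimizing the number of monochromatic edges gives every vertex at most one like-colored neighbor (if $v$ had two neighbors in each color of $L(v)$, then $d(v)\geq 2k\geq\Delta(G)+1$), so each color class has maximum degree at most $1$ and is a forest. Likewise your pigeonhole locating an under-full color $b\in L(v)$ in the list of a vertex of an over-full class is the standard Kostochka--Pelsmajer--West step and is correct. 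The genuine gap is exactly the one you name yourself: the rebalancing move from the heavy class to the light one must preserve the matching property, a single recoloring need not do so, and your alternating-chain repair can stall at an interior vertex $u$ of defect $1$ when $\Delta(G)=2k-1$ and every non-$b$ color of $L(u)$ is met twice on $N(u)$; you supply no mechanism to resolve this, and no Kempe-type fallback exists because lists are arbitrary. A proposal that ends by declaring the missing step ``the heart of the conjecture'' is an accurate diagnosis, not a proof.

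Two further points are worth recording. First, your approach targets a strictly stronger statement than Conjecture \ref{conj2}: an \emph{equitable} list coloring in which every color class has maximum degree at most $1$. Nothing in the conjecture guarantees that the family of matching colorings contains an equitable member, so even if the conjecture is true your route could be blocked in principle, and conversely completing your chain analysis would prove more than is asked. Second, it is instructive to contrast your global optimization with the paper's actual machinery for the special cases, Lemma \ref{label}: one extracts an ordered set $S=\{x_1,\ldots,x_k\}$ with $|N(x_i)\setminus S|\leq 2i-1$, colors $x_k,\ldots,x_1$ with pairwise distinct list colors each used at most once on the relevant neighborhood, and thereby grows every color class by at most one vertex per induction step. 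This sidesteps rebalancing entirely --- equitability is maintained additively rather than repaired --- but it is precisely the existence of such an ordering that degeneracy, claw-freeness, or the discharging analysis certifies, and that no one currently knows how to furnish for arbitrary $G$. Your proposal correctly locates the obstruction but leaves the conjecture where the paper left it.
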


\section{Main Results and their proofs}

\begin{thm}\label{Kn}
The complete graph $K_n$ is equitable list point $k$-arborable for every $k\geq \lceil\frac{n}{2}\rceil$, and moreover, $\rho_l^=(K_n)=\lceil\frac{n}{2}\rceil$. \end{thm}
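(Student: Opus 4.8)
The plan is to exploit the fact that, inside a complete graph, acyclicity is an extremely restrictive condition: any three vertices of $K_n$ span a triangle, so an induced subgraph of $K_n$ is acyclic precisely when it has at most two vertices. Consequently a point-arboreal coloring of $K_n$ is nothing but a (not necessarily proper) coloring in which every color class has size at most $2$. The first step is to record this equivalence and to set $t:=\lceil n/k\rceil$; since $k\ge\lceil n/2\rceil\ge n/2$ we have $t\le 2$, so the equitability ceiling $t$ is no larger than the acyclicity ceiling $2$. Hence, to prove that $K_n$ is equitable list point $k$-arborable, it suffices to produce, for every $k$-list assignment $L$, a choice $c(v)\in L(v)$ in which each color is used on at most $t$ vertices: such a coloring is automatically both acyclic (because $t\le 2$) and equitable.

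The heart of the argument is to cast this as a bipartite matching problem and invoke Hall's theorem. I would form a bipartite graph $H$ whose one part is $V(K_n)$ and whose other part consists, for each color $c$ occurring in some list, of $t$ distinct copies $c^{(1)},\dots,c^{(t)}$, joining $v$ to all $t$ copies of $c$ exactly when $c\in L(v)$. A matching of $H$ saturating $V(K_n)$ assigns to each vertex a color from its list so that no color is chosen more than $t$ times, which is precisely the coloring sought above. To verify Hall's condition, take any nonempty $S\subseteq V(K_n)$: its neighborhood in $H$ has size $t\cdot|\bigcup_{v\in S}L(v)|$, and since every list has size $k$ we get $|\bigcup_{v\in S}L(v)|\ge k$, whence the neighborhood has size at least $tk=\lceil n/k\rceil\,k\ge n\ge|S|$. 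Thus a saturating matching exists for every $k\ge\lceil n/2\rceil$ (the case $k\ge n$, where $t=1$ and the matching is a system of distinct representatives, being subsumed), giving $\rho_l^=(K_n)\le\lceil n/2\rceil$.

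It remains to prove the lower bound $\rho_l^=(K_n)\ge\lceil n/2\rceil$, for which I would show $K_n$ is not even list point $k$-arborable once $k\le\lceil n/2\rceil-1$. The witnessing instance is the constant list assignment $L(v)=\{1,\dots,k\}$ for all $v$: any valid point-arboreal coloring uses each of the $k$ available colors on at most two vertices, hence colors at most $2k$ vertices in total, while $2k\le 2(\lceil n/2\rceil-1)<n$. So no admissible coloring exists, $K_n$ fails to be list point $k$-arborable, and a fortiori fails to be equitable list point $k$-arborable. Combining the two bounds yields $\rho_l^=(K_n)=\lceil n/2\rceil$ and, together with the matching construction, confirms equitable list point $k$-arborability for all $k\ge\lceil n/2\rceil$.

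I expect no serious technical obstacle; the only real insight is the reduction to a system of representatives with bounded multiplicities, and the single inequality that makes everything work is $\lceil n/k\rceil\,k\ge n$, which simultaneously supplies enough color-slots for Hall's condition and keeps the per-color ceiling at most $2$. The delicate point worth stating carefully is that equitable list point $k$-arborability is not a priori monotone in $k$, so the upper bound must be established directly for every admissible $k$ (as the Hall argument does), not merely at the threshold $k=\lceil n/2\rceil$.
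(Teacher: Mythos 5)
Your proof is correct, but it takes a genuinely different route from the paper's. Both arguments rest on the same key observation---an induced acyclic subgraph of $K_n$ has at most two vertices, so the task reduces to a list coloring in which every color is used at most $\lceil n/k\rceil\leq 2$ times---but the paper realizes such a coloring \emph{greedily}: after splitting off the easy case $k\geq n$, it colors $v_1,\ldots,v_k$ with pairwise distinct list colors and then gives each subsequent vertex a list color used at most once so far, which is possible because at most $\lfloor (n-1)/2\rfloor\leq k-1$ colors can be doubly used when a vertex is reached. You instead encode the bounded-multiplicity requirement as a bipartite matching problem, with $t=\lceil n/k\rceil$ copies of each color, and verify Hall's condition through the single inequality $t\cdot\bigl|\bigcup_{v\in S}L(v)\bigr|\geq tk=\lceil n/k\rceil\, k\geq n\geq |S|$. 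Your approach buys uniformity---the cases $t=1$ (i.e., $k\geq n$) and $t=2$ need no separation, whereas the paper treats $k\geq n$ apart---and it isolates cleanly the one inequality doing all the work; the paper's greedy argument buys elementarity, avoiding matching theory entirely. For the lower bound, the paper simply invokes $\rho_l^=(K_n)\geq\rho(K_n)=\lceil n/2\rceil$, citing the known value of the point arboricity of $K_n$, while you re-derive it from scratch via the constant list assignment $L(v)=\{1,\ldots,k\}$ and the count $2k\leq 2\lceil n/2\rceil-2<n$; your version is self-contained but establishes the same fact. Your closing caution that equitable list point $k$-arborability is not a priori monotone in $k$ is well taken, and both proofs do, as required, handle every $k\geq\lceil n/2\rceil$ directly.
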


\begin{proof}
Let $v_1,v_2,\ldots,v_n$ be the vertices of $K_n$ and let $L(v)$ be a $k$-list assignment of colors
for each vertex $v\in V(K_n)$. If $k\geq n$, then it is easy to check that $K_n$ has a list $k$-coloring $c$ with $c(v_i)\neq c(v_j)$ for each $i\neq j$, which implies that $K_n$ is equitable list point $k$-arborable.
We now assume that $k<n$ and construct as follows a list $k$-coloring of $K_n$ by coloring $v_1,v_2,\ldots,v_n$  in sequence.
First, color $v_1$ with $c(v_1)\in L(v_1)$ and color $v_i$ with $c(v_i)\in L(v_i)\setminus \{c(v_1),\ldots,c(v_{i-1})\}$ for each $2\leq i\leq k$. We then color $v_j$ with a color from $L(v_j)$ that is already used at most once on the vertices with lower subscript for each $k+1\leq j\leq n$.
Since $k\geq \lceil\frac{n}{2}\rceil$, all of the above steps can be done. Moreover, one can see that each color under $c$ appears on at most two vertices of $K_n$, which implies that $K_n$ is list point $k$-arborable. Since $\lceil\frac{n}{k}\rceil=2$, $K_n$ is equitable list point $k$-arborable and $\rho_l^=(K_n)\leq \lceil\frac{n}{2}\rceil$. On the other hand, we have $\rho_l^=(K_n)\geq \rho(K_n)=\lceil\frac{n}{2}\rceil$, which implies that $\rho_l^=(K_n)=\lceil\frac{n}{2}\rceil$.
\end{proof}

By Theorem \ref{Kn}, Conjectures \ref{conj} and \ref{conj2} hold for complete graphs, and moreover, the upper bound in Conjecture \ref{conj} and the lower bound in Conjecture \ref{conj2} are sharp if they are right.

\begin{lem}\label{label}
Let $S=\{x_1,\cdots,x_k\}$, where $x_1,\cdots,x_k$ are distinct vertices in $G$. If $G-S$ is equitable list point $k$-arborable and $|N(x_i)\setminus S|\leq 2i-1$ for every $1\leq i\leq k$, then $G$ is equitable list point $k$-arborable.
\end{lem}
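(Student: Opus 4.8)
The plan is to colour $G$ in two stages: first invoke the hypothesis to colour $G-S$, then extend the colouring to $S=\{x_1,\dots,x_k\}$ one vertex at a time. Since every list $L(v)$ has size $k$, the restriction of $L$ to $V(G)\setminus S$ is a $k$-list assignment of $G-S$, so by assumption $G-S$ admits an equitable list point $k$-arborable colouring $c$; here each colour is used at most $\lceil |V(G-S)|/k\rceil$ times, and since $|V(G)|=|V(G-S)|+k$ we have $\lceil |V(G)|/k\rceil=\lceil |V(G-S)|/k\rceil+1$. Thus I have exactly one extra unit of room per colour in which to accommodate $S$.

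The key decision is to extend $c$ to $S$ by processing the vertices in the order $x_k,x_{k-1},\dots,x_1$, while insisting that the $k$ vertices of $S$ receive pairwise \emph{distinct} colours. Forcing distinct colours on $S$ settles equitability for free: each colour then gains at most one new vertex, so no colour exceeds $\lceil |V(G-S)|/k\rceil+1=\lceil |V(G)|/k\rceil$. For acyclicity I will, when colouring $x_i$, forbid any colour that already appears on two or more coloured neighbours of $x_i$; choosing a colour with at most one coloured same-colour neighbour guarantees that in the final colouring every monochromatic subgraph admits an elimination ordering (by colouring time) in which each vertex has at most one earlier neighbour, and is therefore a forest.

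The heart of the argument is the counting that shows a permissible colour always survives in $L(x_i)$. When $x_i$ is reached, the already-coloured vertices of $S$ are exactly $x_{i+1},\dots,x_k$, so distinctness rules out at most $k-i$ colours. The crucial observation is that these same $k-i$ colours already cover every coloured neighbour of $x_i$ lying in $S$, since such neighbours are among $x_{i+1},\dots,x_k$ and carry distinct colours. Consequently, a colour forbidden by the acyclicity rule but not already forbidden by distinctness must appear on two or more neighbours of $x_i$ \emph{outside} $S$; since $|N(x_i)\setminus S|\le 2i-1$, at most $\lfloor(2i-1)/2\rfloor=i-1$ colours are ruled out this way. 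The two forbidden sets thus have total size at most $(k-i)+(i-1)=k-1<k=|L(x_i)|$, leaving a usable colour.

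I expect the main obstacle to be exactly this interplay: a naive extension that treats equitability and acyclicity separately forces one to forbid up to $(k-i)+\lfloor(k+i-1)/2\rfloor$ colours, which exceeds $k$ for small $i$, where $x_i$ has few neighbours outside $S$ but may be adjacent to all of $x_{i+1},\dots,x_k$. The device that resolves this is the insistence on distinct colours along $S$ together with the decreasing order $x_k,\dots,x_1$: it makes the equitability constraint and the $S$-part of the acyclicity constraint coincide, so that only the $\le 2i-1$ neighbours outside $S$ contribute to the acyclicity count. Verifying that this order is what balances the $k-i$ and $i-1$ terms, and checking the elimination-ordering justification for acyclicity, are the steps I would carry out in full.
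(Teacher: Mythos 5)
Your proposal is correct and matches the paper's own proof in every essential respect: you colour $G-S$ by hypothesis, then extend to $S$ in the order $x_k,x_{k-1},\dots,x_1$, insisting on distinct colours along $S$ while forbidding colours already used twice on the coloured neighbourhood, which is exactly the paper's scheme. Your explicit counting $(k-i)+(i-1)=k-1<k$ and the $1$-degeneracy justification of acyclicity merely spell out details the paper leaves implicit.
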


\begin{proof}
Suppose that $G-S$ has an equitable list $k$-coloring $c$ such that each color set of $c$ induces an acyclic subgraph. Note that every color set of $c$ is of size at most $\lceil\frac{|V(G)|-k}{k}\rceil$. Assign $x_k$ a color in its list that used at most once on the neighborhood of $x_k$. Since $N(x_k)\leq 2k-1$ and $|L(x_k)|=k$, this can be done. We then assign $x_{k-1},\ldots,x_2$ and $x_1$ in sequence a color in its list that is different from the one assigned to the vertices with higher subscript and used at most once on its neighborhood. All of those steps can be completed since $|N(x_i)\setminus S|\leq 2i-1$ for every $1\leq i\leq k$. Therefore, the coloring $c$ can be extended to a list $k$-coloring of $G$ so that each color set induces an acyclic subgraph of order of at most
$\lceil\frac{|V(G)|}{k}\rceil$. Hence $G$ is equitable list point $k$-arborable.
\end{proof}

A graph $G$ is \emph{$k$-degenerate} if every subgraph of $G$ has a vertex of degree at most $k$. Using Lemma \ref{label}, we can confirm Conjectures \ref{conj} and \ref{conj2} for 2-degenerate graphs, a wide class including series-parallel graphs, outerplanar graphs, planar graphs with girth at least 6, etc.

\begin{thm}\label{thm1}
Every $2$-degenerate graph $G$ is equitable list point $k$-arborable for every $k\geq \lceil\frac{\Delta(G)+1}{2}\rceil$.
\end{thm}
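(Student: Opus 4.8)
The plan is to argue by induction on $n=|V(G)|$, peeling off $k$ vertices at a time through Lemma \ref{label}. For the base case $n\le k$, I would give every vertex a distinct color from its $k$-list (the lists are large enough since $k\ge n$); each color class is then a single vertex, hence acyclic, and is used at most once, which is at most $\lceil n/k\rceil$, so $G$ is equitable list point $k$-arborable. For the inductive step $n>k$, the goal is to exhibit a set $S=\{x_1,\ldots,x_k\}$ of $k$ vertices with $|N(x_i)\setminus S|\le 2i-1$ for every $1\le i\le k$ and then invoke Lemma \ref{label}. The recursion is legitimate because $G-S$ is again $2$-degenerate (a subgraph of a $2$-degenerate graph is $2$-degenerate) and $\Delta(G-S)\le\Delta(G)\le 2k-1$, so the induction hypothesis applies to $G-S$ with the same $k$.

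The core of the argument is the construction of $S$. Since $G$ is $2$-degenerate, I would fix a removal ordering $u_1,\ldots,u_n$ in which each $u_j$ has at most two neighbors among $\{u_{j+1},\ldots,u_n\}$, and take $S=\{u_1,\ldots,u_k\}$. Because $V(G)\setminus S\subseteq\{u_{j+1},\ldots,u_n\}$ for each $j\le k$, every vertex of $S$ has at most two neighbors outside $S$. Listing the external degrees $|N(u)\setminus S|$ for $u\in S$ in nondecreasing order as $d_1\le\cdots\le d_k$, we thus always have $d_i\le 2$, so the requirement $d_i\le 2i-1$ is automatic for all $i\ge 2$; the only constraint that can fail is $d_1\le 1$, that is, that \emph{some} vertex of $S$ has at most one neighbor outside $S$.

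To secure $d_1\le 1$ I would inspect $u_1$, whose degree is at most two. If $u_1$ has a neighbor in $S$ or has degree at most one, then $u_1$ has at most one external neighbor, so setting $x_1=u_1$ and ordering the rest of $S$ arbitrarily works (this already covers $k=1$, where $\Delta(G)\le 1$). The sole obstruction is when $u_1$ has exactly two neighbors $a,b$, both lying outside $S$; note this forces $\Delta(G)\ge 2$ and hence $k\ge 2$. In that case I would pass to $S'=(S\setminus\{u_k\})\cup\{a\}$. Now $a\in S'$ is a neighbor of $u_1$, so $u_1$ keeps at most one external neighbor and becomes $x_1$; the inserted high-degree vertex $a$ is placed at the top index, as $x_k$, which costs nothing since $|N(a)\setminus S'|\le\deg(a)\le\Delta(G)\le 2k-1$; and each remaining $u_j$ with $2\le j\le k-1$ loses only the internal vertex $u_k$, so its external degree rises by at most one, to at most $3\le 2i-1$ for $i\ge 2$. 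This produces an admissible ordering of $S'$ and completes the inductive step.

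I expect the genuine difficulty to be precisely this last step: forcing the smallest external degree down to at most $1$ without violating the looser constraints higher up. The swap trades one boundary vertex for a high-degree neighbor, and the points to verify carefully are that the displaced vertex $u_k$ inflates each other external degree by at most one and that the newly inserted vertex can always be absorbed at index $k$, where the bound $2k-1\ge\Delta(G)$ renders its constraint vacuous. Everything else — the base case, the $2$-degeneracy and maximum-degree bounds passing to $G-S$, and the final appeal to Lemma \ref{label} — is routine.
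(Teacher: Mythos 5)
Your proposal is correct and takes essentially the same route as the paper: both induct on $|V(G)|$ and build a $k$-set $S$ from a $2$-degeneracy ordering satisfying the hypotheses of Lemma~\ref{label}, with a low-degree vertex at $x_1$ whose neighbor is absorbed into $S$ at position $x_k$ (the paper fixes an edge $uv$ with $d(u)\leq 2$ up front and fills the middle positions greedily with $2^-$-vertices, while you take a prefix of the ordering and swap in a neighbor of $u_1$ when needed --- a cosmetic difference). Your explicit base case $n\leq k$ is in fact slightly more careful than the paper's unstated one.
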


\begin{proof}
If $\Delta(G)\leq 1$, then this result is trivial, so we assume that $\Delta(G)\geq 2$ and $k\geq 2$.
Let $uv$ be an edge of $G$ with $d(u)\leq 2$.
We now construct the set $S=\{x_1,\cdots,x_k\}$ by labeling $u$ and $v$ with $x_1$ and $x_k$, and filling the remaining unspecified positions in $S$ from highest to lowest indices with a
vertex of degree at most 2 in the graph obtained from $G$ by deleting the vertices already chosen for $S$. Those $2^-$-vertices always exist since $G$ is 2-degenerate. Moreover, we have $|N(x_1)\setminus S|\leq 1$, $|N(x_k)\setminus S|\leq \Delta(G)-1\leq 2k-1$ and $|N(x_i)\setminus S|\leq 2\leq 2i-1$ for every $2\leq i\leq k-1$. Therefore, by Lemma \ref{label}, $G$ is equitable list point $k$-arborable if $G-S$ is equitable list point $k$-arborable. Hence we can complete the proof by induction on the order of $G$.
\end{proof}

A graph $G$ is \emph{claw-free} if any 3-vertex subgraph of $G$ can not induce a $K_{1,3}$. Note that any graph obtained from a claw-free graph by removing some vertices is also claw-free.

\begin{thm}\label{thm2}
Every $3$-degenerate claw-free graph $G$ is equitable list point $k$-arborable for every $k\geq \max\{\lceil\frac{\Delta(G)+1}{2}\rceil,3\}$.
\end{thm}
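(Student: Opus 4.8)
The plan is to mirror the proof of Theorem~\ref{thm1}: argue by induction on $|V(G)|$ and, at the inductive step, exhibit a set $S=\{x_1,\dots,x_k\}$ of $k$ distinct vertices with $|N(x_i)\setminus S|\le 2i-1$ for all $i$, so that Lemma~\ref{label} reduces the problem to $G-S$. Since both $3$-degeneracy and claw-freeness are hereditary and $\Delta(G-S)\le\Delta(G)$, the graph $G-S$ again satisfies the hypotheses with the same $k$ and is equitable list point $k$-arborable by induction; the base case $|V(G)|\le k$ is immediate, because lists of size $k\ge|V(G)|$ admit an injective (rainbow, hence acyclic and trivially equitable) colouring. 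The point I would stress first is that for every $i\ge 2$ the requirement $|N(x_i)\setminus S|\le 2i-1$ follows from the cruder bound $|N(x_i)\setminus S|\le 3$, since $2i-1\ge 3$; this is exactly where the hypothesis $k\ge 3$ is used. Thus the only genuinely restrictive position is $x_1$, for which I must force $|N(x_1)\setminus S|\le 1$, i.e.\ all but at most one neighbour of $x_1$ must be swept into $S$.

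First I would dispose of the easy case $\delta(G)\le 2$. Pick a minimum-degree vertex $u$, set $x_1=u$, place one neighbour of $u$ (if any) at position $x_k$, and then fill $x_{k-1},\dots,x_2$ from highest to lowest index by repeatedly choosing a vertex of degree at most $3$ in the graph obtained from $G$ by deleting the vertices already placed in $S$; such a vertex always exists because $G$ is $3$-degenerate. Then $|N(x_1)\setminus S|\le d(u)-1\le 1$, the vertex at $x_k$ satisfies $|N(x_k)\setminus S|\le\Delta(G)-1\le 2k-1$, and every filled $x_i$ satisfies $|N(x_i)\setminus S|\le 3\le 2i-1$, so Lemma~\ref{label} applies.

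The heart of the argument is the case $\delta(G)=3$, and this is where claw-freeness does the real work. Let $u$ be a $3$-vertex with neighbours $a,b,c$. If $a,b,c$ were pairwise non-adjacent, then $\{u,a,b,c\}$ would induce a $K_{1,3}$; since $G$ is claw-free, two of them, say $a$ and $b$, are adjacent. I then set $x_1=u$, $x_k=a$, $x_{k-1}=b$ (these are distinct because $k\ge 3$) and fill $x_{k-2},\dots,x_2$ with degree-at-most-$3$ vertices of the successively reduced graphs as before. Now $|N(x_1)\setminus S|\le 1$, as only $c$ can lie outside $S$; and because each of $a,b$ is adjacent both to $u$ and to the other, each has at least two neighbours inside $S$, whence $|N(a)\setminus S|,\,|N(b)\setminus S|\le\Delta(G)-2$. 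The constraint at $x_k$ is then clear, and the constraint at $x_{k-1}$ reads $\Delta(G)-2\le 2k-3$, i.e.\ $\Delta(G)\le 2k-1$, which holds since $k\ge\lceil(\Delta(G)+1)/2\rceil$ forces $2k\ge\Delta(G)+1$.

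I expect the main obstacle to be precisely this bound at position $x_{k-1}$. If one naively placed two neighbours of $u$ at $x_k$ and $x_{k-1}$ without exploiting the edge $ab$, the vertex at $x_{k-1}$ would only be guaranteed $|N(x_{k-1})\setminus S|\le\Delta(G)-1$, requiring $\Delta(G)\le 2k-2$, which can fail by one exactly when $2k=\Delta(G)+1$. Claw-freeness supplies the single extra neighbour in $S$ needed to close this gap, so the delicate steps to verify carefully are that one guaranteed edge among the neighbours of a $3$-vertex always suffices and that the indices $x_1,x_{k-1},x_k$ never collide, which is why $k\ge 3$ is assumed rather than merely $k\ge\lceil(\Delta(G)+1)/2\rceil$.
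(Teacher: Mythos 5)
Your proposal is correct and follows essentially the same route as the paper's proof: reduce via Lemma~\ref{label} and induction, handle $\delta(G)\le 2$ as in Theorem~\ref{thm1}, and for a $3$-vertex $u$ use claw-freeness to find an edge among its neighbours, placing $u$ at $x_1$ and the two adjacent neighbours at $x_{k-1},x_k$ so that each has at least two neighbours in $S$, giving the bound $\Delta(G)-2\le 2(k-1)-1$. Your verification of the arithmetic and of the distinctness of $x_1,x_{k-1},x_k$ (the genuine reason for $k\ge 3$) matches the paper's argument, so no changes are needed.
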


\begin{proof}
Let $ux$ be an edge of $G$ with $d(u)\leq 3$. If $d(u)\leq 2$, then we can prove this result by the same arguments as in the proof of Theorem \ref{thm1}. Hence we assume that $d(u)=3$. Let $y$ and $z$ be another two neighbors of $u$. Since $G$ is claw-free, we can assume, without loss of generality, that $yz\in E(G)$.
We now construct the set $S=\{x_1,\cdots,x_k\}$ by labeling $u,y$ and $z$ with $x_1,x_{k-1}$ and $x_k$, and filling the remaining unspecified positions in $S$ from highest to lowest indices with a vertex of degree at most 3 in the graph obtained from $G$ by deleting the vertices already chosen for $S$.
Since $|N(x_1)\setminus S|\leq 1$, $|N(x_{k-1})\setminus S|\leq \Delta(G)-2\leq 2(k-1)-1$, $|N(x_k)\setminus S|\leq \Delta(G)-2\leq 2k-1$ and $|N(x_i)\setminus S|\leq 3\leq 2i-1$ for every $2\leq i\leq k-2$, $G$ is equitable list point $k$-arborable by Lemma \ref{label} if $G-S$ is equitable list point $k$-arborable.
This makes us possible to complete the proof by induction on the order of $G$.
\end{proof}

\begin{cor}\label{cor1}
Every $3$-degenerate claw-free graph $G$ with maximum degree $\Delta\geq 4$ is equitable list point $k$-arborable for every $k\geq \lceil\frac{\Delta+1}{2}\rceil$.
\end{cor}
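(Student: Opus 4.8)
The plan is to derive this corollary directly from Theorem~\ref{thm2}, whose conclusion already covers every $3$-degenerate claw-free graph but under the slightly stronger hypothesis $k\geq \max\{\lceil\frac{\Delta+1}{2}\rceil,3\}$. So the only thing I would need to do is reconcile the two lower bounds on $k$: I must check that the hypothesis $k\geq\lceil\frac{\Delta+1}{2}\rceil$ of the corollary, together with the assumption $\Delta\geq 4$, already forces $k\geq 3$, so that the two conditions coincide.

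First I would verify the arithmetic. If $\Delta\geq 4$, then $\frac{\Delta+1}{2}\geq \frac{5}{2}$, hence $\lceil\frac{\Delta+1}{2}\rceil\geq 3$. Consequently $\max\{\lceil\frac{\Delta+1}{2}\rceil,3\}=\lceil\frac{\Delta+1}{2}\rceil$ for every $\Delta\geq 4$, and any $k$ satisfying $k\geq\lceil\frac{\Delta+1}{2}\rceil$ automatically satisfies $k\geq\max\{\lceil\frac{\Delta+1}{2}\rceil,3\}$.

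With this observation in hand, the corollary follows immediately: every such $G$ satisfies the hypotheses of Theorem~\ref{thm2}, which yields that $G$ is equitable list point $k$-arborable. There is essentially no obstacle here, since the statement is a pure specialization of the theorem; the role of the constraint $\Delta\geq 4$ is precisely to absorb the extra ``$\max$ with $3$'' appearing in the theorem. That constraint was needed there because the construction in its proof places three prescribed vertices $u,y,z$ into the set $S$ at positions $x_1,x_{k-1},x_k$, and these positions are pairwise distinct only when $k\geq 3$.
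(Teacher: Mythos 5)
Your proposal is correct and matches the paper's (implicit) reasoning exactly: the corollary is stated without proof precisely because, when $\Delta\geq 4$, one has $\lceil\frac{\Delta+1}{2}\rceil\geq 3$, so the hypothesis of Theorem~\ref{thm2} reduces to $k\geq\lceil\frac{\Delta+1}{2}\rceil$ and the corollary is a pure specialization. Your closing remark about why $k\geq 3$ is needed in the theorem (so that the positions $x_1,x_{k-1},x_k$ are distinct) is also accurate.
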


\begin{thm}\label{thm3}
Every planar graph is equitable list point $k$-arborable for every $k\geq \max\{\lceil\frac{\Delta(G)+1}{2}\rceil,5\}$.
\end{thm}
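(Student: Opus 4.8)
The plan is to argue by induction on $|V(G)|$, phrased through a minimal counterexample. Suppose $G$ is a planar graph of smallest order that fails to be equitable list point $k$-arborable for some fixed $k\geq\max\{\lceil(\Delta(G)+1)/2\rceil,5\}$, and fix a plane embedding of $G$. If $|V(G)|\le k$ we colour the vertices with distinct colours from their lists, so assume $|V(G)|>k$. For any $k$-subset $S=\{x_1,\dots,x_k\}$ the graph $G-S$ is planar, of smaller order, and satisfies $\Delta(G-S)\le\Delta(G)$, so by minimality it is equitable list point $k$-arborable; Lemma \ref{label} would then force $G$ to be equitable list point $k$-arborable as well. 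Hence $G$ can contain \emph{no} set $S=\{x_1,\dots,x_k\}$ with $|N(x_i)\setminus S|\le 2i-1$ for every $i$, and this non-existence is the reducibility statement from which I would mine structure.

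First I would record a peeling observation that isolates the real difficulty. Since $G$ is planar it is $5$-degenerate, so after reserving any two vertices $x_1,x_2$ we may repeatedly delete a vertex of degree at most $5$ from the current graph while avoiding $x_1,x_2$ (possible because a plane graph always has several vertices of degree at most $5$), and use the $k-2$ deleted vertices, in reverse order of deletion, as $x_k,\dots,x_3$. Each such vertex has at most $5\le 2i-1$ neighbours outside $S$, so positions $x_3,\dots,x_k$ are automatically admissible. Consequently a valid $S$ exists as soon as we can find $x_1,x_2$ for which, after this peeling, $|N(x_1)\setminus S|\le 1$ and $|N(x_2)\setminus S|\le 3$. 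The task therefore collapses to producing one vertex whose neighbourhood is almost entirely absorbed into $S$ and a second whose neighbourhood is absorbed up to three exceptions.

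From the impossibility of choosing such $x_1,x_2$ I would extract a list of forbidden local configurations: a minimal counterexample can carry no small cluster of low-degree vertices (for instance no two suitably placed vertices of very small degree, and no low-degree vertex all but one of whose neighbours are themselves peelable low-degree vertices), and its small faces must be heavily surrounded by large vertices. Each forbidden configuration would be justified by exhibiting the absorbing set $S$ it creates. With these in hand I would run a discharging argument: assign to each vertex $v$ the charge $d(v)-4$ and to each face $f$ the charge $d(f)-4$, whose total is $-8$ by Euler's formula. I would then move charge from the $5^{+}$-vertices and $5^{+}$-faces toward the $3^{-}$-vertices and $3$-faces that carry the deficit, using the reducibility constraints to guarantee that every object ends with nonnegative charge, contradicting the negative total.

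The main obstacle is the identification of the right reducible configurations: they must simultaneously follow from Lemma \ref{label} (so that the absorbing set $S$ genuinely closes up, including the peelability of $x_3,\dots,x_k$) and be strong enough that a discharging scheme can certify nonnegativity everywhere. The delicate point is the interplay between the \emph{global} peeling used for the high positions and the \emph{local} absorption needed for $x_1$ and $x_2$: a high-degree neighbour of a prospective $x_1$ is precisely the kind of vertex that minimum-degree peeling refuses to remove, so dense, triangulation-like regions of large minimum degree are where the argument is tightest and where the discharging weights, especially around $4$-vertices, $5$-vertices and $3$-faces, must be tuned most carefully.
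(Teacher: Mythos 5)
Your overall strategy coincides with the paper's: a minimal counterexample, reducibility via Lemma~\ref{label} with the positions $x_3,\dots,x_k$ (or $x_3,\dots,x_{k-2}$, when several high positions are pre-labeled) filled by iteratively peeling $5^-$-vertices from the planar graph, followed by discharging with initial charge $d(x)-4$ and total $-8$. But the proposal stops exactly where the work begins, and its one concrete commitment about the endgame is wrong. First, the ``list of forbidden local configurations'' is never produced, and it is not routine: the paper needs ten specific claims, whose common trick is to pre-label not only $x_1,x_2$ (and sometimes $x_3$) with low-degree vertices but also $x_{k-1},x_k$ with chosen \emph{neighbors} of those vertices, using $k\geq\lceil(\Delta(G)+1)/2\rceil$ to get $|N(x_{k-1})\setminus S|\leq\Delta(G)-2\leq 2(k-1)-1$ and $|N(x_k)\setminus S|\leq 2k-1$. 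This is precisely the mechanism that absorbs the high-degree neighbor which, as you correctly note, the minimum-degree peeling refuses to remove; you name this as ``the delicate point'' but do not resolve it. Moreover, even the case $\delta(G)=1$ is not handled by Lemma~\ref{label} at all (for a $1$-vertex $u$ one cannot satisfy the $x_2$-condition, since $u$'s neighborhood gives no second absorbable vertex): the paper's Claim~1 instead extends an equitable coloring of $G-v$ directly, by counting how many colors already occupy $\lceil(|V(G)|-1)/k\rceil$ vertices and choosing $c(v)$ outside that set. That argument, which manages the \emph{equitability} constraint rather than the acyclicity constraint, is absent from your sketch.

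Second, your stated goal --- ``guarantee that every object ends with nonnegative charge, contradicting the negative total'' --- is unachievable, and this is a genuine structural feature, not a presentational choice. A minimal counterexample can still contain a single $2$-vertex: it is irreducible because placing it at $x_1$ forces its second neighbor into $S$ at a low position, where $|N(x_i)\setminus S|\leq 2i-1$ fails if that neighbor has large degree. Accordingly, the paper's claims only bound the \emph{number} of bad objects (Claim~2: at most one $3^-$-vertex when $\delta(G)=2$; Claim~6: at most three $3$-faces incident with a $3^-$-vertex), and the discharging deliberately leaves the lone $2$-vertex at charge $-2$, a unique $3$-vertex at charge $\geq -1$, and up to three such $3$-faces at charge $-1$ each. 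The contradiction is then quantitative: the total final charge is at least $-4$, which is strictly greater than $-8$. Without this aggregate-deficit comparison your discharging cannot close, no matter how the weights around $4$-vertices, $5$-vertices and $3$-faces are tuned; so the proposal, while pointed in the right direction, has a gap both in the missing configuration analysis and in the final counting step.
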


\begin{proof}
Let $G$ be a planar graph such that $G$ is not equitable list point $k$-arborable and every subgraph $G'\subset G$ is equitable list point $k$-arborable.

\vspace{2mm}\noindent Claim 1. \emph{$\delta(G)\geq 2$.}

\proof Suppose, to the contrary, that $G$ has a vertex $v$ of degree at most 1. By the minimality of $G$, $G-v$ is equitable list point $k$-arborable and each color appears on at most $\lceil\frac{|V(G)|-1}{k}\rceil$ vertices of $G-v$. Let $S$ be the set of colors that appears on exactly $\lceil\frac{|V(G)|-1}{k}\rceil$ vertices of $G-v$ and let $s=|S|$. Since $s\lceil\frac{|V(G)|-1}{k}\rceil\leq |V(G)|-1$, $s\leq k$ and the upper bound can be attained only if  $|V(G)|\equiv 1$ (mod $k$). If $s<k$, then give $v$ a color $c(v)$ from $L(v)\setminus S$ which has size at least $k-(k-1)=1$. Since the color $c(v)$ appears on at most $\lceil\frac{|V(G)|-1}{k}\rceil-1$ vertices of $G-v$, $c(v)$ appears on at most $\lceil\frac{|V(G)|-1}{k}\rceil\leq \lceil\frac{|V(G)|}{k}\rceil$ vertices of $G$, which implies that $G$ is equitable list point $k$-arborable. If $s=k$, then $G-v$ is incident with exactly $k$ colors and $\lceil\frac{|V(G)|-1}{k}\rceil+1=\lceil\frac{|V(G)|}{k}\rceil$. Hence we can color $v$ with an arbitrary color form its list to ensure that $G$ is equitable list point $k$-arborable.\hfill$\Box$

\vspace{2mm}\noindent Claim 2. \emph{If $\delta(G)=2$, then $G$ contains only one $3^-$-vertex.}

\proof Suppose that $G$ has a 2-vertex $u$ and a $3^-$-vertex $v$ with $u\neq v$. Let $x$ and $y$ be the neighbors of $u$. Note that $v$ may be $x$ or $y$. Label $u$ and $v$ with $x_1$ and $x_2$, and label the unique vertex in $N(u)\setminus \{v\}$ with $x_k$.
Fill the remaining unspecified positions in $S=\{x_1,\cdots,x_k\}$ from highest to lowest indices with a
vertex of degree at most 5 in the graph obtained from $G$ by deleting the vertices already chosen for $S$. Since every planar graph is 5-degenerate, this can be done. One can check that $|N(x_i)\setminus S|\leq 2i-1$ for every $1\leq i\leq k$. Therefore, $G$ is equitable list point $k$-arborable by Lemma \ref{label}, since $G-S$ is so by the minimality of $G$. \hfill$\Box$

\vspace{2mm}\noindent Claim 3. \emph{Every $3$-vertex is adjacent only to $5^+$-vertices.}

\proof Suppose, to the contrary, that a 3-vertex $u$ is adjacent to a $4^-$-vertex $v$. Label $u,v$ and one vertex in $N(u)\setminus \{v\}$ with $x_1,x_2$ and $x_k$, and fill the remaining unspecified positions in $S$ as in Lemma \ref{label} by the similar way as in the proof of Claim 2. Since $G-S$ is equitable list point $k$-arborable, $G$ is also equitable list point $k$-arborable by Lemma \ref{label}.\hfill$\Box$

\vspace{2mm}\noindent Claim 4. \emph{If $\delta(G)=3$, then $\Delta(G)\geq 6$.}

\proof Suppose, to the contrary, that $\Delta(G)\leq 5$. Let $u$ be a 3-vertex and let $v,w$ be two neighbors of $u$. By Claim 3, $d(v)=d(w)=5$.
Let $x$ be a neighbor of $v$ that is different from $u$ and $w$. We now label $u,v,w$ and $x$ by $x_1,x_2,x_3$ and $x_k$, and fill the remaining unspecified positions in $S$ as in Lemma \ref{label} by the similar way as in the proof of Claim 2. Hence by Lemma \ref{label}, $G$ is equitable list point $k$-arborable since $G-S$ is so by the minimality of $G$. \hfill$\Box$

\vspace{2mm}\noindent Claim 5. \emph{If $\delta(G)=3$ and there are at least two $3$-vertices, then every $3$-vertex is adjacent only to maximum degree vertices.}

\proof Let $u$ and $v$ be different two 3-vertices. If $uv\in E(G)$, then label $u,v$ and one vertex in $N(u)\setminus \{v\}$ by $x_1,x_2$ and $x_k$, respectively, and fill the remaining unspecified positions in $S$ as in Lemma \ref{label} by the similar way as in the proof of Claim 2. This operation implies that $G$ is equitable list point $k$-arborable by Lemma \ref{label}, since $G-S$ is so by the minimality of $G$, a contradiction. Therefore, we assume that $uv\not\in E(G)$. Suppose, to the contrary, that $u$ is adjacent to a $(\Delta(G)-1)^-$-vertex $w$. In this case we label $u,v,w$ and one vertex in $N(u)\setminus \{w\}$ with $x_1,x_2,x_{k-1}$ and $x_k$, respectively, and fill the remaining unspecified positions in $S$ as in Lemma \ref{label} properly. Since $G-S$ is equitable list point $k$-arborable, $G$ is also equitable list point $k$-arborable by Lemma \ref{label}.\hfill$\Box$

\vspace{2mm}\noindent Claim 6. \emph{If $\delta(G)\leq 3$, then there are at most three $3$-faces that is incident with a $3^-$-vertex.}

\proof If there are four 3-faces that is incident with a $3^-$-vertex, then there must be two 3-faces $f_1=uvw$ and $f_2=xyz$ with $d(u)\leq 3$, $d(x)\leq 3$ and $u\neq x$. Label $u,x,v$ and $w$ with $x_1,x_2,x_{k-1}$ and $x_k$, respectively, and fill the remaining unspecified positions in $S$ as in Lemma \ref{label} by the similar way as in the proof of Claim 2. Since $G-S$ is equitable list point $k$-arborable, $G$ is also equitable list point $k$-arborable by Lemma \ref{label}.\hfill$\Box$

\vspace{2mm}\noindent Claim 7. \emph{Let $f=uvw$ be a $3$-face.
If $d(u)\leq 4$, then $d(v)\geq 6$ and $d(w)\geq 6$.}

\proof Suppose, to the contrary, that $d(v)\leq 5$. If $d(u)=4$, then label $u,v,w$ and a vertex in $N(u)\setminus \{v,w\}$ by $x_1,x_2,x_{k-1}$ and $x_k$, respectively. If $d(u)\leq 3$, then label $u,v$ and $w$ by $x_1,x_2$ and $x_k$, respectively, In any case we can construct the set $S$ as in Lemma \ref{label} by filling the remaining unspecified positions from highest to lowest indices with a
vertex of degree at most 5 in the graph obtained from $G$ by deleting the vertices already chosen for $S$. By Lemma \ref{label}, $G$ is equitable list point $k$-arborable since $G-S$ is so by the minimality of $G$, a contradiction. Therefore, we have $d(v)\geq 6$, and by symmetry, $d(w)\geq 6$. \hfill$\Box$

\vspace{2mm}\noindent Claim 8. \emph{Let $f=uvw$ be a $3$-face that is incident only with $4^+$-vertices.
If $\delta(G)\leq 3$ and $d(u)=4$, then $d(v)\geq 8$ and $d(w)\geq 8$.}

\proof Let $x$ be the vertex with the minimum degree. Suppose, to the contrary, that $d(v)\leq 7$. Label $u,x,v,w$ and one vertex in $N(u)\setminus\{v,w\}$ with $x_1,x_2,x_3,x_{k-1}$ and $x_k$, and fill the remaining unspecified positions in $S$ as in Lemma \ref{label} by the similar way as in the proof of Claim 2. By Lemma \ref{label}, $G$ is equitable list point $k$-arborable since $G-S$ is so by the minimality of $G$, a contradiction implying that $d(v)\geq 8$. By symmetry we also have $d(w)\geq 8$.\hfill$\Box$

\vspace{2mm}\noindent Claim 9. \emph{Let $f_1=uvw$ be a $3$-face and let $f_2$ be the face sharing the common edge $uv$ with $f_1$.
If $d(u)=5$, $5\leq d(v)\leq 6$ and $5\leq d(w)\leq 7$, then $d(f_2)\geq 4$.}

\proof Suppose, to the contrary, that $f_2=uvx$ is a 3-face. Label $u,v,w,x$ and one vertex in $N(u)\setminus\{x,v,w\}$ with $x_1,x_2,x_3,x_{k-1}$ and $x_k$, and fill the remaining unspecified positions in $S$ as in Lemma \ref{label} by the similar way as in the proof of Claim 2. Since $G-S$ is equitable list point $k$-arborable, $G$ is also equitable list point $k$-arborable by Lemma \ref{label}.\hfill$\Box$

\vspace{2mm}\noindent Claim 10. \emph{Let $f_1=uvw$ be a $3$-face and let $f_2$ be the face sharing the common edge $uv$ with $f_1$.
If $d(u)=6$ and $d(v)=4$, then $d(f_2)\geq 4$.}

\proof Suppose, to the contrary, that $f_2=uvx$ is a 3-face. Label $v,u,w$ and $x$ by $x_1,x_2,x_{k-1}$ and $x_k$,
and fill the remaining unspecified positions in $S$ as in Lemma \ref{label} by the similar way as in the proof of Claim 2.
By Lemma \ref{label}, $G$ is equitable list point $k$-arborable since $G-S$ is so by the minimality of $G$.\hfill$\Box$

\vspace{2mm} We now prove Theorem \ref{thm3} by discharging. First, assign to each element $x\in V(G)\cup F(G)$ an initial charge $c(x)=d(x)-4$. By Euler's formula, it is easy to see that $\sum_{x\in V(G)\cup F(G)}c(x)=-8$. In the next, we will reassign a new charge, denoted by $c'(x)$, to each
$x\in V(G)\cup F(G)$ according to the following discharging rules. Since our rules only move charge around, and do not affect the sum, we have $\sum_{x\in
V(G)\cup F(G)}c'(x)=\sum_{x\in V(G)\cup F(G)}c(x)=-8$.

\vspace{2mm} R1. If $v$ is a 3-vertex that is adjacent only to maximum degree vertices, then $v$ receives $\frac{1}{3}$ from each of its neighbors.

R2. Let $f=uvw$ be 3-face that is incident only with $4^+$-vertices.

\indent\indent R2.1. If $\delta(G)\leq 3$ and $d(u)=6$, then $u$ sends $\frac{1}{3}$ to $f$.

\indent\indent R2.2. If $\delta(G)\leq 3$ and $d(u)=7$, then $u$ sends $\frac{3}{7}$ to $f$.

\indent\indent R2.3. If $d(u)\geq 8$, then $u$ sends $\frac{1}{2}$ to $f$.


\indent\indent R2.4. If $\delta(G)=4$ and $d(u)=4$, then each of $v$ and $w$ sends $\frac{1}{2}$ to $f$.

R3. If $f=uvw$ is a 3-face that is incident only with $5^+$-vertices.

\indent\indent R3.1. If $\delta(G)\geq 4$ and $d(u)=6$, then $u$ sends $\frac{1}{3}$ to $f$.

\indent\indent R3.2. If $\delta(G)\geq 4$ and $d(u)=7$, then $u$ sends $\frac{3}{7}$ to $f$.


R4. If there is a face $f$ that is incident only with $4^+$-vertices and has negative charge $-\gamma$ after applying R2 and R3, then $f$ receives $\frac{\gamma}{n_5}$ from each of its incident $5$-vertices, where $n_5$ denotes the number of 5-vertices that are incident with $f$.

\vspace{2mm} Since 2-vertices, 4-vertices and $4^+$-faces are not involved in the above rules by Claim 3, we have $c'(v)=c(v)=-2$ for a 2-vertex $v$, $c'(v)=c(v)=0$ for a 4-vertex $v$ and $c'(f)=c(f)\geq 0$ for a $4^+$-face.
If $G$ has at least two 3-vertices, then $\delta(G)=3$ by Claim 2 and
$c'(v)=-1+3\times\frac{1}{3}=0$ for a 3-vertex $v$ by Claim 5 and R1. If $G$ has exactly one 3-vertex $v$, then $c'(v)\geq c(v)=-1$. If $f$ is a 3-face that is incident with a $3^-$-vertex, then $c'(f)=c(f)=-1$.
Let $f$ be a 3-face that is incident only with $4^+$-vertices.
If $f$ is incident only with $6^+$-vertices, then $c'(f)\geq -1+3\times\frac{1}{3}=0$ by R2 and R3. If $f$ is incident with at least one 5-vertex, then R4 guarantees that $c'(f)\geq 0$. If $f$ is incident with a 4-vertex and $\delta(G)\leq 3$,
then $f$ is incident with at least two $8^+$-vertices by Claim 8, which implies that $c'(f)\geq -1+2\times\frac{1}{2}=0$ by R2.3.
If $f$ is incident with a 4-vertex and $\delta(G)=4$, then $c'(f)= -1+2\times\frac{1}{2}=0$ by R2.4.

We now estimate the final charges of $5^+$-vertices. By R1--R4, $5^+$-vertices only send charges to its adjacent 3-vertices and incident 3-faces that are incident only with $4^+$-vertices. From now on, we call the 3-face that is incident only with $4^+$-vertices \emph{considerable 3-faces}.

Case 1. $\delta(G)=2$ or $\delta(G)=5$.

 By Claim 2, $G$ has no 3-vertices.
Let $v$ be a 5-vertex. Since every 3-face that is incident with $v$ is incident with only $5^+$-vertices by Claim 7, $v$ sends such a 3-face at most $\frac{1}{3}$ by R2 and R4.
If $v$ is incident with at least two 4-faces, then it is easy to see that $c'(v)\geq 1-3\times\frac{1}{3}=0$. Thus we assume that $v$ is incident with at most one 4-face, which implies, by Claim 9, that $v$ is incident with at most two considerable $(5,6^-,7^-)$-faces.
If $v$ is incident with two considerable $(5,6^-,7^-)$-faces, then $v$ is incident with a $4^+$-face and two $(5,7,7^+)$-faces, which implies that
$c'(v)\geq 1-2\times\frac{1}{3}-2\times (1-2\times\frac{3}{7})=\frac{1}{21}>0$ by R2.2, R2.3, R3.2 and R4.
If $v$ is incident with exactly one considerable $(5,6^-,7^-)$-face, then by Claim 9, $v$ is incident with a $4^+$-face and three $(5,7^+,7^+)$-faces, or a $4^+$-face, a $(5,7,7^+)$-face, a $(5,7^+,8^+)$-face and a $(5,5^+,8^+)$-face, or a $4^+$-face, a $(5,7,8^+)$-face and two $(5,5^+,8^+)$-faces, which implies that $c'(v)\geq 1-\frac{1}{3}-\max\{3\times(1-2\times\frac{3}{7}), (1-2\times\frac{3}{7})+(1-\frac{3}{7}-\frac{1}{2})+\frac{1}{2}\times(1-\frac{1}{2}), (1-\frac{3}{7}-\frac{1}{2})+2\times\frac{1}{2}\times(1-\frac{1}{2})\}=\frac{2}{21}>0$ by R2.2, R2.3, R3.2 and R4.
We now assume that every considerable 3-face that is incident with $v$ is either $(5,7^+,7^+)$-face or $(5,6^-,8^+)$-face. If $v$ is incident with at most four 3-faces or is incident with a $(5,8^+,8^+)$-face, then $c'(v)\geq 1-4\times\max\{1-2\times\frac{3}{7},\frac{1}{2}\times(1-\frac{1}{2})\}=0$ by R2.2, R2.3, R3.2 and R4. If $v$ is incident with at most two considerable $(5,6^-,8^+)$-faces, then $c'(v)\geq 1-2\times \frac{1}{2}\times(1-\frac{1}{2})-3\times (1-2\times\frac{3}{7})=\frac{1}{14}>0$ by R2.2, R2.3, R3.2 and R4. Therefore, we shall only consider the case when $v$ is incident with five 3-faces and at least three of them are considerable $(5,6^-,8^+)$-faces. However, one can easy to check that if this case occurs then $v$ is incident with a $(5,8^+,8^+)$-face and we come back to the case considered before.
If $v$ is a 6-vertex, then $c'(v)\geq 2-6\times\frac{1}{3}=0$ by R2.1 and R3.1. If $v$ is a 7-vertex, then $c'(v)\geq 3-7\times\frac{3}{7}=0$ by R2.2 and R3.2. If $v$ is a $8^+$-vertex, then $c'(v)\geq d(v)-8-\frac{1}{2}d(v)\geq 0$ by R2.3.

Case 2. $\delta(G)=3$.

By Claim 4, we have $\Delta(G)\geq 6$. Under this condition, one can prove that $c'(v)\geq 0$ for every 5-vertex $v$ by the same arguments as in Case 1, since $5$-vertices would not send charges to 3-vertices by R1.
Let $v$ be a $6^+$-vertex and let $n_3$ be the number of 3-vertices that are adjacent to $v$. It is easy to see that $v$ is incident with at most $d(v)-n_3-1$ considerable 3-faces. Therefore, if $d(v)=6$, then $c'(v)\geq d(v)-4-\frac{1}{3}n_3-\frac{1}{3}(d(v)-n_3-1)= \frac{1}{3}>0$ by R1 and R2.1, if $d(v)=7$, then
$c'(v)\geq d(v)-4-\frac{1}{3}n_3-\frac{3}{7}(d(v)-n_3-1)\geq \frac{3}{7}>0$ by R1 and R2.2, and if $d(v)\geq 8$, then
$c'(v)\geq d(v)-4-\frac{1}{3}n_3-\frac{1}{2}(d(v)-n_3-1)\geq \frac{1}{2}>0$ by R1 and R2.3.

Case 3. $\delta(G)=4$.

By Claim 7, R2.4 will not apply to any 5-vertex, thus by the same arguments as in Case 1 one can prove that $c'(v)\geq 0$ for every 5-vertex $v$. If $v$ is a $8^+$-vertex, then by R2.3 and R2.4, $c'(v)\geq d(v)-4-\frac{1}{2}d(v)\geq 0$.


Let $v$ be a 6-vertex. If $v$ is incident with no $(4,6,6^+)$-faces, then by R3.1, $c'(v)\geq 2-6\times\frac{1}{3}=0$.
If $v$ is incident with exactly one $(4,6,6^+)$-face, then by Claim 10, $v$ is incident with a $4^+$-face, which implies that $c'(v)\geq 2-\frac{1}{2}-4\times\frac{1}{3}=\frac{1}{6}>0$ by R2.4 and R3.1. If $v$ is incident with two $(4,6,6^+)$-faces, then by Claim 10, $v$ is incident with at least one $4^+$-face, which implies that $c'(v)\geq 2-2\times\frac{1}{2}-3\times\frac{1}{3}=0$ by R2.4 and R3.1. If $v$ is incident with at least three $(4,6,6^+)$-faces, then $v$ is incident with at least two $4^+$-faces by Claim 10, which implies that $c'(v)\geq 2-4\times\frac{1}{2}=0$ by R2.4.

Let $v$ be a 7-vertex. If $v$ is incident with no $(4,7,6^+)$-faces, then by R3.2, $c'(v)\geq 3-7\times\frac{3}{7}=0$.
If $v$ is incident with exactly one $(4,7,6^+)$-face, then by Claim 10, $v$ is incident with a $4^+$-face, which implies that $c'(v)\geq 3-\frac{1}{2}-5\times\frac{3}{7}=\frac{5}{14}>0$ by R2.4 and R3.2. If $v$ is incident with two $(4,7,6^+)$-faces, then by Claim 10, $v$ is incident with at least one $4^+$-face, which implies that $c'(v)\geq 3-2\times\frac{1}{2}-4\times\frac{3}{7}=\frac{2}{7}>0$ by R2.4 and R3.2. If $v$ is incident with at least three $(4,7,6^+)$-faces, then $v$ is incident with at least two $4^+$-faces by Claim 10, which implies that $c'(v)\geq 3-5\times\frac{1}{2}=\frac{1}{2}>0$ by R2.4.

\vspace{2mm} Until now, we have proved that the final charges of $4^+$-vertices, $4^+$-faces and considerable 3-faces are nonnegative. Therefore, if $\delta(G)=2$, then $\sum_{x\in V(G)\cup F(G)}c'(x)\geq -2-1=-3$, since there are no 3-vertices by Claim 2 and the unique 2-vertex has final charge -2 and there may be a 3-face that is incident with this 2-vertex with final charge -1. If $\delta(G)=3$ and $G$ has only one 3-vertex, then this 3-vertex has final charge at least -1 and there may be at most three 3-faces that are incident with it, any of which has final charge -1. This implies that $\sum_{x\in V(G)\cup F(G)}c'(x)\geq -1-3=-4$.
If $\delta(G)=3$ and $G$ has at least two 3-vertices, then by the above arguments we know that every 3-vertex has final charge 0, and by Claim 6 there are at most three 3-faces that is incident with a 3-vertex, any of which has final charge -1. This implies that $\sum_{x\in V(G)\cup F(G)}c'(x)\geq -3$. If $\delta(G)\geq 4$, then it is easy to see that $\sum_{x\in V(G)\cup F(G)}c'(x)\geq 0$. All in all, we obtain that $\sum_{x\in V(G)\cup F(G)}c'(x)\geq -4$, contradicting the fact that $\sum_{x\in V(G)\cup F(G)}c'(x)=-8$.
\end{proof}

\begin{cor}\label{cor2}
Every planar graph with maximum degree $\Delta\geq 8$ is equitable list point $k$-arborable for every $k\geq \lceil\frac{\Delta+1}{2}\rceil$.
\end{cor}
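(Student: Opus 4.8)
The plan is to derive this corollary directly from Theorem~\ref{thm3} via a purely arithmetic observation, since the only gap between the two statements is the $\max\{\,\cdot\,,5\}$ appearing in the threshold of the theorem. First I would check that the hypothesis $\Delta\geq 8$ forces $\lceil(\Delta+1)/2\rceil$ to be at least $5$ on its own. Indeed, $\lceil(\Delta+1)/2\rceil\geq 5$ is equivalent to $(\Delta+1)/2>4$, i.e.\ to $\Delta>7$; as $\Delta$ is an integer this is precisely $\Delta\geq 8$. Hence, for every planar graph $G$ with $\Delta(G)=\Delta\geq 8$,
\[
\max\left\{\left\lceil\frac{\Delta+1}{2}\right\rceil,\,5\right\}=\left\lceil\frac{\Delta+1}{2}\right\rceil .
\]

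With this identity in hand I would simply invoke Theorem~\ref{thm3}, which asserts that $G$ is equitable list point $k$-arborable for every $k\geq\max\{\lceil(\Delta(G)+1)/2\rceil,5\}$. By the displayed equality this range of admissible $k$ is exactly $k\geq\lceil(\Delta+1)/2\rceil$, which is precisely the conclusion of the corollary. I do not expect any genuine obstacle: the corollary is an immediate specialization of the theorem, and the only point needing a moment's care is the boundary value $\Delta=8$, where $\lceil 9/2\rceil=5$ still meets the inequality, so the constant $5$ is exactly absorbed into the maximum-degree bound rather than strictly dominated. In other words, $\Delta\geq 8$ is exactly the range in which the auxiliary constant $5$ in Theorem~\ref{thm3} becomes redundant, and no separate argument beyond that theorem is required.
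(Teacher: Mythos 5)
Your proposal is correct and is exactly the intended derivation: the paper states the corollary without proof as an immediate consequence of Theorem~\ref{thm3}, relying on precisely the arithmetic fact you verify, namely that for integer $\Delta$ one has $\lceil(\Delta+1)/2\rceil\geq 5$ if and only if $\Delta\geq 8$, so the constant $5$ in the theorem's threshold is absorbed. Your check of the boundary case $\Delta=8$ (where $\lceil 9/2\rceil=5$) is accurate, and nothing further is needed.
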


\end{document}